\newtheorem{theorem}{Theorem}[section]
\newtheorem{remark}[theorem]{Remark}
\newtheorem{lemma}[theorem]{Lemma}
\numberwithin{equation}{section}
\newtheorem{example}[theorem]{Example}
\DeclareMathAlphabet{\mathcal}{OMS}{cmsy}{m}{n}
\let\mathbb\relax
\DeclareMathAlphabet{\mathbb}{U}{msb}{m}{n}
\def\tsc#1{\csdef{#1}{\textsc{\lowercase{#1}}\xspace}}
\begin{document}
	\let\WriteBookmarks\relax
	\def\floatpagepagefraction{1}
	\def\textpagefraction{.001}
	\let\printorcid\relax
	% Short title
	\shorttitle{Quasi-stationary distributions for continuous-time $\lambda$-recurrent    jump processes}    
	%
	%% Short author
	\shortauthors{Qian DU and Yong-Hua MAO}  
	
	% Main title of the paper
	\title [mode = title]{Quasi-stationary distributions for continuous-time $\lambda$-recurrent    jump processes}  
	
	% Title footnote mark
	% eg: \tnotemark[1]
	%\tnotemark[<tnote number>] 
	%
	%% Title footnote 1.
	%% eg: \tnotetext[1]{Title footnote text}
	%\tnotetext[<tnote number>]{<tnote text>} 
	
	% First author
	%
	% Options: Use if required
	% eg: \author[1,3]{Author Name}[type=editor,
	%       style=chinese,
	%       auid=000,
	%       bioid=1,
	%       prefix=Sir,
	%       orcid=0000-0000-0000-0000,
	%       facebook=<facebook id>,
	%       twitter=<twitter id>,
	%       linkedin=<linkedin id>,
	%       gplus=<gplus id>]
	
	%\author[<aff no>]{<author name>}[<options>]
	%
	%% Corresponding author indication
	%\cormark[<corr mark no>]
	%
	%% Footnote of the first author
	%\fnmark[<footnote mark no>]
	%
	%% Email id of the first author
	%\ead{<email address>}
	%
	%% URL of the first author
	%\ead[url]{<URL>}
	
	% Credit authorship
	% eg: \credit{Conceptualization of this study, Methodology, Software}
	%\credit{<Credit authorship details>}
	
	% Address/affiliation
	\author{Qian DU} %% Author name
	\author{Yong-Hua MAO}
	
	\affiliation{organization={School of Mathematical Sciences},
		addressline={Beijing Normal University,
			Laboratory of Mathematics and Complex Systems, Ministry of Education}, 
		city={	Beijing},
		%          citysep={}, % Uncomment if no comma needed between city and postcode
		postcode={100875}, 
		country={People's Republic of 	China}}
	
	%\author[<aff no>]{<author name>}[<options>]
	
	% Footnote of the second author
	%\fnmark[2]
	%
	%% Email id of the second author
	%\ead{}
	%
	%% URL of the second author
	%\ead[url]{}
	%
	%% Credit authorship
	%\credit{}
	%
	%% Address/affiliation
	%\affiliation[<aff no>]{organization={},
		%            addressline={}, 
		%            city={},
		%%          citysep={}, % Uncomment if no comma needed between city and postcode
		%            postcode={}, 
		%            state={},
		%            country={}}
	%
	%% Corresponding author text
	%\cortext[1]{Corresponding author}
	%
	%% Footnote text
	%\fntext[1]{}
	
	% For a title note without a number/mark
	%\nonumnote{}
	
	% Here goes the abstract
	\begin{abstract}
		For the continuous-time  $\lambda$-recurrent jump process, the $\lambda$-recurrence assures the existence of quasi-stationary distribution when it has finite exit states (the states that have positive killing rates). And we give an explicit representation for this quasi-stationary distribution through $Q$-matrix, 
		where the components of the quasi-stationary distribution outside the  set $H$ of exit states can be represented by those within $H$. Sufficient condition is also provided for quasi-stationary distribution when the exit states are infinite.
	\end{abstract}
	
	% Use if graphical abstract is present
	%\begin{graphicalabstract}
	%\includegraphics{}
	%\end{graphicalabstract}
	
	% Research highlights
	%\begin{highlights}
	%\item 
	%\item 
	%\item 
	%\end{highlights}
	
	% Keywords
	% Each keyword is seperated by \sep
	
	\begin{keywords}
		$\lambda$-recurrent jump process\sep 
		$\lambda$-invariant measure \sep 
		quasi-stationary distribution \sep
		$\lambda$-positive recurrence\sep
	\end{keywords}
	
	\maketitle
	
	% Main text
	%\section{}\label{}
	\section{Introduction}
	Quasi-stationary distribution (QSD, in short) was firstly discovered by 
	\citet{1947Certain} for Galton-Watson branching processes, now it is well known  the name of Yaglom limit. Actually for  classical branching processes, there exist a family of conditional limits including the Yaglom limit. For a finite continuous-time Markov chain, this conditional limit is an application of the famous Perron-Frobenius theorem for non-negative matrices, see \cite{mandl1960asymptotic,darroch1967quasi}. The notation ``quasi-stationary distribution'' comes from \citet{bartlett1960stochastic}. The same phenomena occur for some other kinds of processes, such as birth-death process \cite{cavender1978quasi,good1968limiting},
%	\cite{Daley1969,  coolen2006quasi, van2012conditions}, 
	skip-free process \citet{kijima1993quasi}, or diffusion process \cite{steinsaltz2007quasistationary,pinsky1985convergence,cattiaux2009quasi}. For a comprehensive bibliography, see
	\citet{pollettquasi,collet2013quasi}.
	
	Consider $Q$-matrix $Q=(q_{ij})$ on a  denumerable state space $\widetilde{E}=E\cup\{0\}$ with state $0$ absorbing. We assume that $Q$ is irreducible on $E$, that is, $\forall ~i,~j\in E$, $\exists$ different $i=i_0,~i_1,\cdots,~i_m=j\in E$ such that $q_{ii_1}q_{i_1i_2}\cdots q_{i_{m-1}j}>0$. We assume that $Q$ is totally stable: $\forall~i\in E,~q_i<\infty$, where $q_i=-q_{ii}$ by convention. And we also assume the corresponding $Q$-process $(\widetilde{X}_{t})_{t\geq0}$ is unique, with transition probabilities $P(t)=(p_{ij}(t))_{i,~j\in \widetilde{E}}$ such that $\widetilde p_{00}(t)=1$. 	Let $X=(X_t)_{t\geq0}$ be the jump process of  $(\widetilde{X}_t)_{t\geq0}$ restricted on $E$, with transition probabilities $P(t)=(p_{ij}(t))_{i,j\in E}$.
 We are going to study the QSD for $X$,  that is, we want to seek   $x>0$ and proper probability measure $u$ on $E$ such that,
	$$
	uQ=-x u ~~~~\text{or} ~~~~\sum\limits_{i\in E}u_ip_{ij}(t)=\exp(-xt) u_j,~~j\in E.
	$$
	Denote decay parameter 
	$$ 
	\lambda=\inf\{\epsilon>0:\int_{0}^{\infty}e^{\epsilon t}p_{ij}(t)dt=\infty\}=\sup\{\epsilon>0:\int_{0}^{\infty}e^{\epsilon t}p_{ij}(t)dt<\infty\},
	$$
	which  is independent of $i, ~j\in E$ by irreducibility. Denote $\tau_0$ the life time of $X$:~$\tau_0=\inf\{t\geq 0:\widetilde{X}_t=0\}$. Define $\lambda_0$ as the rate of convergence to zero of $\mathbb{P}_i[t<\tau_0]$, that is, 
	$$\lambda_0=\inf\{a>0:\mathbb{E}_ie^{a\tau_0}=\infty\}=\sup\{a:\mathbb{E}_ie^{a\tau_0}<\infty\},$$
	which  is also independent of $i\in E$ by irreducibility. It should be noticed that $0\leq\lambda_0 \leq \lambda $ since $p_{ii}(t)\leq\mathbb{P}_{i}[\tau_0>t]\leq1,$
	where each inequality can be strict (\cite{van2012quasi},~\cite[Remark 3.1.4]{jacka1995weak} or \cite[Page 190]{van2005extinction}).
	
	Recall that $X$ or $P$ is called $\lambda$-recurrent if $\int_{0}^{\infty}e^{\lambda t}p_{ij}(t)dt=\infty,~ \forall~i,~ j\in E;$ $\lambda$-positive recurrent if $\lim_{t\to\infty}e^{\lambda t}p_{ij}(t)>0$.
	It is well known (cf. \cite{kingman1963exponential}) that when $X$ is $\lambda$-recurrent,  it exists the unique $\lambda$-invariant measure  and $\lambda$-invariant vector  up to constant multiples, that is, there exist positive $x=(x_i)$ and $y=(y_i)$ such that 
	$$
	xP(t)=exp(-\lambda t)x,~~~~P(t)y=exp(-\lambda t)y.
	$$
		If there exists $\lambda$-QSD, then $\lambda$-recurrence is equivalent to $\mathbb{E}_i\lambda^{\tau_0}=\infty$, that is, $\lambda=\lambda_0$ (\cite[Corollary 9]{van2012quasi}). 
	However, we assume that $P$ is $\lambda$-recurrent, which allows us to  easily use the classical  theory of  minimal non-negative solutions (cf. \cite{hou2012homogeneous}).  
	``Embedded chain'' $T=(T_{ij})_{i,j\in E}$ is defined by 
		\begin{equation*}
		T_{ij}=\left\{
		\begin{aligned}
			\frac{q_{ij}}{q_i-\lambda},~i\not=j;\\
			0,~i=j.
		\end{aligned}
		\right.
	\end{equation*}
Denote convergence parameter
$$R=\sup\{s\geq 0:\sum_{n\geq 0}T_{ij}^{(n)}s^n<\infty\}=\inf\{s\geq 0:\sum_{n\geq 0}T_{ij}^{(n)}s^n=\infty\},$$ which is independent of $i,~j\in E$ by irreducibility.
For non-negative matrix $T$, \cite{seneta2006non} firstly gave the definition of its recurrence: $\sum_{n\geq 0}T_{ij}^{(n)}R^n=\infty$. In the following theorem, firstly we will build the equivalence between the $\lambda$-recurrence of $P(t)$ and recurrence of $T$, construct the $\lambda$-invariant measure and $\lambda$-invariant vector.
Similarly to taboo probability, for a finite set $H\subset E$, we define ${_{H}T_{ij}^{(n)}}$ recursively by
\begin{equation*}
	{_{H}T_{ij}^{(n)}}=\left\{
	\begin{aligned}
		&T_{ij},~~~~&n=1,\\
		&\sum_{l\notin H}{_{H}T_{il}^{(n-1)}}T_{lj},~~~&n\geq2,
	\end{aligned}
	\right.
\end{equation*}
with convention that ${_{H}T_{ij}^{(0)}}=\delta_{ij}$ if $i\notin H$, or 0 if $i\in H$.
	We call  state $i\in E$ an exit state for $X$ if~$q_{i0}>0.$ We give an explicit formula for $\lambda$-QSD by $Q$-matrix, where the components of the $\lambda$-QSD outside the  set $H$ of exit states can be represented in terms of the components within $H$. Our approach is helpful since most processes have more than one exit state, such as quasi-birth-and-death processes (cf. \cite{bean2000quasistationary,van2006alpha}), whose transition matrix is of a block-partitioned form. Denote
		$
		\tau_H^+=\inf\{t\geq \xi_1:X_t\in H,~0<s<t,~X_s\notin H\}
		$
		 the (first) return time to set $H$, where $\xi_1$ is the  time of the first jump: $\xi_1=\inf\{t> 0:X_t\not=X_0\}$.  Then we have the following theorem.
	\begin{theorem}\label{main}
		Assume $Q$ matrix is totally stable and its $Q$ process $P(t)$ is unique. Assume the absorption to state 0 is certain (i.e. $\mathbb{P}_i[\tau_0<\infty]=1,~\forall~i\in E$) and  $P(t)$ is $\lambda$-recurrent with $\lambda>0$. If the set of exit states 
		$H=\{i\in E:q_{i0}>0\}$ is finite, then there exists $\lambda$-QSD.
		More precisely, 	
		\begin{enumerate}
			\item [(1)] define $T^H=({T_{ij}^H})_{i,j\in H}$  by setting ${T_{ij}^{H}}=\sum_{n=1}^{\infty}{_HT_{ij}^{(n)}}.$ 
			Then there exists  the  unique  $0<\mu=(\mu_i)_{i\in H}~(\sum_{i\in H}\mu_i=1)$ that satisfies $\mu T^H=\mu$.
			\item [(2)] Let
			$$
			~~~~~~~~~~~~~~~~~~~x_i=\sum\limits_{j\in H}\frac{\mu_j}{q_i-\lambda}\sum\limits_{n=1}^{\infty}{_H{T_{ji}^{(n)}}},~~~~~~~~~i\notin H,
			$$
			and
			$$
			M_H:=\sum_{i\in H}\mu_i\mathbb{E}_ie^{\lambda\tau_0}I_{[\tau_H^+=\infty]}.
			$$
			Then by letting
			$u_i=\lambda\mu_i/((q_i-\lambda)M_H)$~for $i\in H$; $\lambda x_i/M_H$ for $i\notin H$, $u=(u_i)_{i\in E}$  is  $\lambda$-QSD of $P(t)$.
		\end{enumerate}
	\end{theorem}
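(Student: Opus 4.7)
The plan is to verify directly that the explicit formulae for $\mu$ and $u$ produce a $\lambda$-QSD. The argument rests on three ingredients: (i) the equivalence between $\lambda$-recurrence of $P$ and recurrence of the non-negative matrix $T$ established earlier in the paper; (ii) a taboo/excursion construction of a $\lambda$-invariant measure $x$ across the finite boundary set $H$; and (iii) a short probabilistic identity that pins down the normalizing constant $M_H$.

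\emph{Step 1: existence and uniqueness of $\mu$.} Because $T$ is recurrent in the sense of Seneta and $H$ is finite, starting from any $i\in H$ the ``$T$-chain'' returns to $H$ with total first-hitting mass equal to one; equivalently, the finite matrix $T^H=(T^H_{ij})_{i,j\in H}$ is row-stochastic. It inherits irreducibility from $Q$, so by the Perron--Frobenius theorem for finite irreducible stochastic matrices it admits a unique strictly positive invariant probability $\mu$, which is part (1) of the theorem.

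\emph{Step 2: construction of the $\lambda$-invariant measure.} Set $v_i=\mu_i$ for $i\in H$ and $v_i=(q_i-\lambda)x_i=\sum_{j\in H}\mu_j\sum_{n\geq1}{}_HT^{(n)}_{ji}$ for $i\notin H$. The key calculation is $vT=v$ on $E$: splitting $\sum_i v_i T_{ij}$ into contributions from $i\in H$ and $i\notin H$, collapsing the second piece via the recursion ${}_HT^{(n+1)}_{kj}=\sum_{l\notin H}{}_HT^{(n)}_{kl}T_{lj}$, and finally invoking $\mu T^H=\mu$ on $H$, gives $(vT)_j=v_j$ for every $j\in E$; the interchange of sums is legal by non-negativity. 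Since $v_iT_{ij}=x_iq_{ij}$ for $i\neq j$ and $T_{ii}=0$, the relation $vT=v$ rewrites as $\sum_{i\neq j}x_iq_{ij}=(q_j-\lambda)x_j$, i.e.\ $xQ=-\lambda x$ on $E$. Strict positivity of $x$ is inherited from positivity of $\mu$ and irreducibility of $Q$.

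\emph{Step 3: the normalization $M_H$ and the QSD property.} The decisive probabilistic identity is
\[
\mathbb{E}_i\bigl[e^{\lambda\tau_0}I_{[\tau_H^+=\infty]}\bigr]=\frac{q_{i0}}{q_i-\lambda},\qquad i\in H.
\]
Starting from $i\in H$, the first jump happens at $\xi_1$ (exponential with rate $q_i$) and takes $\widetilde X$ to $\widetilde X_{\xi_1}$. Since $q_{k0}=0$ for $k\notin H$, any trajectory with $\tau_H^+=\infty$ and $\widetilde X_{\xi_1}\neq0$ would force $\tau_0=\infty$, which is excluded because absorption is certain. Hence $\{\tau_H^+=\infty\}$ agrees almost surely with $\{\widetilde X_{\xi_1}=0\}$, on which $\tau_0=\xi_1$, and the identity follows from $\mathbb{E}\,e^{\lambda\xi_1}=q_i/(q_i-\lambda)$ together with $\mathbb{P}_i[\widetilde X_{\xi_1}=0]=q_{i0}/q_i$. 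Integrating against $\mu$ yields $M_H=\sum_{i\in H}\mu_iq_{i0}/(q_i-\lambda)$, which is finite because $H$ is finite. Summing the equation $xQ=-\lambda x$ over $j\in E$ and using $\sum_{j\in E}q_{ij}=-q_{i0}$ and $x_i=\mu_i/(q_i-\lambda)$ on $H$ yields $\lambda\sum_i x_i=\sum_{i\in H}x_iq_{i0}=M_H$, so $\sum_i x_i<\infty$ and $u=\lambda x/M_H$ is a probability measure on $E$. Strict positivity of $u$ together with $uQ=-\lambda u$, combined with uniqueness of the $\lambda$-invariant measure under $\lambda$-recurrence, then gives $uP(t)=e^{-\lambda t}u$, which is the $\lambda$-QSD property.

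The main obstacle I anticipate is the bookkeeping in Step 3: justifying $\lambda\sum_i x_i=\sum_{i\in H}x_iq_{i0}$ without assuming a priori that $\sum_i x_i<\infty$ requires an interchange involving signed terms from $Q$. A clean remedy passes through the semigroup: the identity $\sum_i x_i\mathbb{P}_i[\tau_0>t]=e^{-\lambda t}\sum_i x_i$ (valid in $[0,\infty]$) yields, after subtracting $\sum_i x_i$, dividing by $t$, and letting $t\downarrow0$ with dominated convergence anchored by the finite set $H$, exactly the equality above. The rest of the argument is then routine.
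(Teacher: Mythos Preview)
Your Step~2 matches the paper's argument essentially verbatim, and your first-jump identity $\mathbb{E}_i[e^{\lambda\tau_0}I_{[\tau_H^+=\infty]}]=q_{i0}/(q_i-\lambda)$ is exactly formula~(2.10) in the paper. Two points, however, need repair.

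\textbf{Step 1 is wrong as written.} The matrix $T^H$ is \emph{not} row-stochastic in general. Its entries are $T^H_{ij}=\mathbb{E}_i[e^{\lambda\tau_H^+}I_{[\tau_H^+<\infty,\,X_{\tau_H^+}=j]}]$, so the $i$-th row sums to $\mathbb{E}_i[e^{\lambda\tau_H^+}I_{[\tau_H^+<\infty]}]$, which carries an exponential tilt and need not equal~$1$. For a concrete counterexample take $E=H=\{1,2\}$ with $T_{12}=a$, $T_{21}=1/a$, $a\neq1$: then $T$ is $1$-recurrent, $T^H=T$, and the row sums are $a$ and $1/a$. What \emph{is} true is that $T^H$ has Perron--Frobenius eigenvalue~$1$: the paper (Lemma~2.2) shows this by exhibiting the explicit left eigenvector $x^k_j=\sum_{n\geq1}{}_kT^{(n)}_{kj}$, $j\in H$, obtained from last-entry-into-$H$ decomposition together with the single-state recurrence identity $\sum_n{}_kT^{(n)}_{kk}=1$. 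Alternatively you can argue via the right $1$-eigenvector $y$ of $T$ (whose restriction to $H$ is a right $1$-eigenvector of $T^H$), but you cannot invoke stochasticity.

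\textbf{Step 3 has a genuine gap.} Your computation $\lambda\sum_i x_i=\sum_{i\in H}x_iq_{i0}$ by summing $xQ=-\lambda x$ over $E$ interchanges a sum that involves the signed rows of $Q$ and an a~priori possibly infinite $\sum_i x_i$; you acknowledge this, but the proposed remedy $\sum_i x_i\mathbb{P}_i[\tau_0>t]=e^{-\lambda t}\sum_i x_i$ presupposes $xP(t)=e^{-\lambda t}x$, which you have not established from $xQ=-\lambda x$ alone (and which you later invoke only \emph{after} knowing $u$ is a probability). The paper sidesteps this entirely: since $x_i=\sum_{j\in H}\mu_j\int_0^\infty e^{\lambda t}\mathbb{P}_j[X_t=i,\,\tau_H^+>t]\,dt$ by Lemma~2.1, one can sum over $i\in E$ inside the integral (Tonelli, everything non-negative) to obtain
\[
\sum_{i\in E}x_i=\sum_{j\in H}\mu_j\,\mathbb{E}_j\!\int_0^{\tau_H^+\wedge\tau_0} e^{\lambda t}\,dt,
\]
and then the split according to $\{\tau_H^+<\infty\}$ versus $\{\tau_H^+=\infty\}$ together with $\sum_{j\in H}\mu_j\mathbb{E}_j[e^{\lambda\tau_H^+}I_{[\tau_H^+<\infty]}]=1$ yields $\sum_i x_i=M_H/\lambda<\infty$ directly. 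Replacing your two remedied steps with this probabilistic computation closes the gap cleanly.
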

	\begin{remark} 
	\begin{enumerate}
		\item [(i)] Fix $k\in E$, it is known that $\int_{0}^{\infty}e^{\lambda t}\mathbb{P}_{i}[X_t=j,\tau_j^+>t]dt,~{i\in E}$ is the $lambda$-invariant measure starting from a single state (see \cite{jacka1995weak}. Here we can also derive it by $Q$-matrix using the minimal non-negative  solution method, which make it easier to compute.
	And it follows from Lemma \ref{taboo relation} that 
		$$\int_{0}^{\infty}e^{\lambda t}\mathbb{P}_j[X_t=i,\tau_j^+>t]dt=\frac{1}{q_i-\lambda}\sum\limits_{n=1}^{\infty}{_{j}T_{ji}^{(n)}},~~~i\in E.$$
		\item [(ii)] 
		Similarly to (\ref{sum}), we can derive that the moment condition for the existence of $\lambda$-QSD is  $\mathbb{E}_ke^{\lambda\tau_{0}}I_{[\tau_k^+=\infty]}<\infty$.~
		\item [(iii)] The  existence of $\lambda$-QSD for $\lambda$-recurrent Markov chain with finite exit states is given in \cite[Theorem 12]{van2012quasi}. Here  we give the explicit formula for the $\lambda$-QSD.
	\end{enumerate}

\end{remark}

To deal with the case that exit states may be infinite, we use $h$-transform, which transforms $X$ to a new and ``equivalent'' Markov process $S(t)$  with single exit $\{k\}$, say. Let $h_i=\mathbb{P}_{i}[\tau_k^+<\infty]$ for $i\not=k$; $h_k=1$. We have following result.

\begin{theorem}\label{theorem 1.5}
	Assume the absorption to state 0 is certain (i.e. $\mathbb{P}_i[\tau_0<\infty]=1,~\forall~i\in E$) and  $P(t)$ is $\lambda$-recurrent, $\lambda>0$. Then
	$$\mathbb{E}_ke^{\lambda\tau_0}I_{[\tau_k^+=\infty]}\leq \frac{q_k(1-\mathbb{P}_k[\tau_k^+<\infty])}{\inf_{i\in E}h_i(q_k-\lambda)}.$$
	Particularly if $\inf_{i\in E}h_i>0$, then there exists the $\lambda$-QSD.
\end{theorem}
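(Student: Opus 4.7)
The plan is to exploit Doob's $h$-transform to realise the auxiliary process $S$ with single exit $\{k\}$ alluded to in the excerpt, apply Theorem~\ref{main} to $S$ (with $|H|=1$), and transfer back to $X$. Set $\tilde q_{ij}=q_{ij}h_j/h_i$ for $i\neq j$ in $E$ and $\tilde q_{ii}=q_{ii}$. A first-step analysis of $h_i=\mathbb{P}_i[\tau_k^+<\infty]$ (with $h_k=1$, $h_0=0$) shows $Qh\equiv 0$ on $E\setminus\{k\}$ while $Qh(k)=-q_k(1-\mathbb{P}_k[\tau_k^+<\infty])$; hence the row sums of $\tilde Q$ vanish off $k$ and force a single killing rate $\tilde q_{k0}=q_k(1-\mathbb{P}_k[\tau_k^+<\infty])$. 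The corresponding process $S$ therefore has $\{k\}$ as its unique exit state.

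The crucial $h$-transform identity concerns excursions from $k$ staying in $E\setminus\{k\}$: the jump-rate ratio $\tilde q_{ij}/q_{ij}=h_j/h_i$ together with equal holding rates $\tilde q_i=q_i$ telescope along any such path (using $h_k=1$), so that for every $j\in E$,
\[
\mathbb{P}^S_k[X_t=j,\tau_k^+>t]=h_j\,\mathbb{P}^X_k[X_t=j,\tau_k^+>t].
\]
Integrating against $e^{\lambda t}$ and invoking Remark~(i) gives $\tilde y^{(k)}_j=h_j\,y^{(k)}_j$, where $y^{(k)}_j:=\int_0^\infty e^{\lambda t}\mathbb{P}_k[X_t=j,\tau_k^+>t]\,dt$ is the $\lambda$-invariant measure of $X$ from $k$ and $\tilde y^{(k)}$ is its $S$-analogue. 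A parallel $h$-transform computation transfers the $\lambda$-recurrence equivalent $\mathbb{E}^X_k[e^{\lambda\tau_k^+};\tau_k^+<\infty]=1$ to $\mathbb{E}^S_k[e^{\lambda\tau_k^+};\tau_k^+<\infty]=1$, so $S$ is $\lambda$-recurrent.

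For the inequality, the ``killing rate $\times$ $\lambda$-occupation'' formula gives $\mathbb{E}^X_k\,e^{\lambda\tau_0}I_{[\tau_k^+=\infty]}=\sum_{j\in E}q_{j0}\,y^{(k)}_j$, while summing the components of $y^{(k)}Q=-\lambda y^{(k)}$ over $E$ produces $\sum_j y^{(k)}_j q_{j0}=\lambda\sum_j y^{(k)}_j$; hence $\mathbb{E}^X_k\,e^{\lambda\tau_0}I_{[\tau_k^+=\infty]}=\lambda\sum_j y^{(k)}_j$. Applying the same argument to $S$ with $\tilde y^{(k)}=h\cdot y^{(k)}$, and using the direct computation that a non-returning excursion in $S$ is just the first jump $k\to 0$ with $\xi_1\sim\mathrm{Exp}(q_k)$, gives
\[
\lambda\sum_j h_j\,y^{(k)}_j=\mathbb{E}^S_k\,e^{\lambda\tau_0^S}I_{[\tau_k^+=\infty]}=\frac{q_k(1-\mathbb{P}_k[\tau_k^+<\infty])}{q_k-\lambda}.
\]
The termwise bound $y^{(k)}_j\leq h_j\,y^{(k)}_j/\inf_i h_i$ then yields the stated inequality.

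When $\inf_i h_i>0$, the process $S$ satisfies all hypotheses of Theorem~\ref{main} with $H=\{k\}$ (finite), so it admits a $\lambda$-QSD $v$. Setting $u_i:=v_i/h_i$, the relation $\tilde q_{ij}=q_{ij}h_j/h_i$ transforms $v\tilde Q=-\lambda v$ into $uQ=-\lambda u$, and $\sum_i u_i\leq(\sum_i v_i)/\inf_i h_i<\infty$ allows normalising $u$ into the desired $\lambda$-QSD of $X$. The hard part will be the careful verification of the excursion-level $h$-transform identity above (from which $\tilde y^{(k)}=h\cdot y^{(k)}$ and the $\lambda$-recurrence of $S$ both follow), together with the Fubini justification of $\sum_j y^{(k)}_j q_{j0}=\lambda\sum_j y^{(k)}_j$ when the total $\lambda$-invariant mass may be infinite.
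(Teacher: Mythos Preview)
Your proposal is correct and follows essentially the same route as the paper: both construct the $h$-transformed process $S$ with $Q$-matrix $s_{ij}=q_{ij}h_j/h_i$, observe that $S$ has single exit $\{k\}$ and is $\lambda$-recurrent, apply the single-exit case of Theorem~\ref{main} to $S$ to obtain $\lambda\sum_j h_j y^{(k)}_j=q_k(1-\mathbb{P}_k[\tau_k^+<\infty])/(q_k-\lambda)$, and then divide by $\inf_i h_i$. The paper phrases the key identity $\tilde y^{(k)}_j=h_j y^{(k)}_j$ via the ${}_kT$-representation of Lemma~\ref{taboo relation} rather than your excursion-level formula, but these are equivalent.

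One step you skip that the paper does carry out: before invoking Theorem~\ref{main} (and before your claim that a non-returning $S$-excursion from $k$ is necessarily the first jump $k\to 0$), you need $S$ to be certainly absorbed. This is not automatic, since a priori the $S$-chain could leave $k$ to some $j\in E\setminus\{k\}$ and wander forever without returning; the paper checks it by computing $\mathbb{P}^S_i[\tilde\tau_0<\infty]$ explicitly. The cleanest argument in your framework is that for $j\neq k$ the $h$-transform of the embedded chain restricted to $E$ satisfies $\mathbb{P}^S_j[\tau_k<\infty]=\mathbb{P}^X_j[\tau_k<\infty]/h_j=1$, so from $k$ the $S$-chain is absorbed by a geometric argument. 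You should add this verification.
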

	In the following examples, we show that $\lambda$-recurrence or $\mathbb{E}_ie^{\lambda\tau_0}=\infty$ is not sufficient for the existence of $\lambda$-QSD.
	The first one is $\lambda$-positive recurrent and has $\lambda$-QSD.
	The second one is birth-death process on a line, and it is $\lambda$-null recurrent without $\lambda$-QSD. The third one reveals that $\lambda$-QSD exists even if the Markov chain is $\lambda$-transient. 
	We will prove them in Section 3.
\begin{example}
	Let $Q$ be defined on the state space $\mathbb{Z}_+=0\cup\{E\}$ with $q_0=0, ~q_{10}=w,~ q_{11}=r-1,~q_{i,~i+1}=p~(i\geq1),~q_{ii}=-1,~q_{i1}=q~(i>1)$, where $0<p<1,~ w>0, ~p+q=1,~ p+r+w=1$. Then
	\begin{itemize}
		\item [(1)] $\lambda=\dfrac{2(1-p)w}{1+w+\sqrt{(1-w)^2+4pw}}$;
		\item [(2)] $\left(\dfrac{p}{1-\lambda}\right)^{i-1}\dfrac{\lambda}{w},~i\geq 1$ is $\lambda$-QSD of $P(t)$;
		\item [(3)] $P(t)$ is $\lambda$-positive recurrent. 
	\end{itemize}
\end{example}
	\begin{example}\label{ex5.2}
		Let $Q$ be defined on the state space $E=\mathbb{Z}$ with $q_{i,i+1}=pc,~,q_{ii}=-c,~q_{i,i-1}=qc,$ where $0<p<1,~p+q=1$ and $c>0$. Then
		\begin{enumerate}
			\item [(1)] $P(t)$ is $\lambda$-null recurrent with $\lambda=(1-2\sqrt{pq})c$;
			\item [(2)] there is no corresponding $\lambda$-QSD.
		\end{enumerate}
	\end{example}
	\begin{example}\label{ex5.3}
		Let $Q$ on state space $\mathbb{Z}_{+}=0\cup\{E\}$ with $q_{00}=0,$~$q_{i,i+1}=pc,$~$q_{i,i-1}=qc~(i\geq1),$ where $0<p<q,~p+q=1$,~and $c>0$.
		Then
		\begin{enumerate}
			\item [(1)] $P(t)$ is $\lambda$-transient with $\lambda=(1-2\sqrt{pq})c$;
			\item [(2)] ${j(\sqrt{p})^{j-1}(\sqrt{q}-\sqrt{p})^2}/{\sqrt{q}^{j+1}},~j\geq1$ 	is  $\lambda$-QSD of $P$.
		\end{enumerate} 
	\end{example}
	\section{Quasi-stationary distribution}
	This section is devoted to presenting the  moment condition of the existence of $\lambda$-QSD for the $\lambda$-recurrent jump process, and then we apply it to some special cases.~Especially when the exit states are finite, $\lambda$-recurrence is sufficient for the existence of quasi-stationary distribution.
	
	Firstly, we will give the explicit representation for the $\lambda$-invariant measure and $\lambda$-invariant vector, based on the ``taboo probability'' for a finite set.
	For jump process  $\widetilde X_t$ on $\widetilde{E}=E\cup\{0\}$ with 
	transition probability $\widetilde P(t)=(\widetilde p_{ij}(t))_{i,~j\in \widetilde{E}}$, let
	$ P(t)=(p_{ij}(t))_{i,j\in E}$.
	
	For $s\geq0,~i,~j\in E$, let 
	$$
	P_{ij}(s)=\int_{0}^{\infty}e^{s t}p_{ij}(t)dt,~F_{ij}(\lambda)=\int_{0}^{\infty}e^{s t}d\mathbb{P}_i[\tau_j^+\leq t],
	$$
	where $\tau_j^+$ is the  first return time to $j$. Note that $P_{ij}(s),~F_{ij}(s)$ is finite for $s\in[0,\lambda)$. Then for $i\not=j$,
	\begin{equation}\label{generation function}
		P_{jj}(s)=\frac{1}{q_j-\lambda}\frac{1}{1-F_{jj}(s)},~P_{ij}(s)=\frac{F_{ij}(s)}{(q_j-\lambda)(1-F_{jj}(s))},~~~~s\in[0,\lambda).
	\end{equation}
Recall that ``embedded chain'' $T=(T_{ij})_{i,~j\in E}$ is defined by
	\begin{equation}\label{solution1}
	T_{ij}=\left\{
	\begin{aligned}
		\frac{q_{ij}}{q_i-\lambda},~i\not=j;\\
		0,~i=j.
	\end{aligned}
	\right.
\end{equation}
	It follows from irreducibility of $Q$ and \cite[Theorem 1]{kingman1963exponential} that $T$ is non-negative and irreducible on $E$.
To prove Theorem \ref{main}, we firstly build the equivalence between the $\lambda$-recurrence of $P(t)$ and recurrence of $T$.
\begin{lemma}\label{taboo relation}
For a finite set $H\subset E$,	fix $j\in H$, then
\begin{itemize}
	\item [(1)]\begin{equation}
		\begin{aligned}
			\int_{0}^{\infty}e^{\lambda t}d\mathbb{P}_{j}[\tau_H^+=\tau_i^+\leq t]&=\sum\limits_{n=1}^{\infty}{_{H}T_{ji}^{(n)}},~~~&i\in H,\\
			\int_{0}^{\infty}e^{\lambda t}\mathbb{P}_j[X_t=i,\tau_H^+>t]dt&=\frac{1}{q_i-\lambda}\sum\limits_{n=1}^{\infty}{_{H}T_{ji}^{(n)}},~~~&i\notin H;
		\end{aligned}
	\end{equation}
\item [(2)] $T$ is recurrent if and only if $P(t)$ is $\lambda$-recurrent;
\item [(3)] if $P(t)$ is $\lambda$-recurrent with $\lambda>0$, then 
$$u_i=\frac{\sum_{n=1}^{\infty}{{_{j}T_{ji}^{(n)}}}}{q_i-\lambda},~~~~i\in E$$
and $u=(u_i)_{i\in E}$ satisfies $uQ=-\lambda u$.
\end{itemize}

\end{lemma}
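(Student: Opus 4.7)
The three parts will be proved in sequence, each feeding into the next. For part (1), the idea is to decompose along the embedded jump chain: write $0=\xi_0<\xi_1<\cdots$ for the jump times, $Y_n=X_{\xi_n}$ for the post-jump states, and $\eta_k=\xi_{k+1}-\xi_k$ for the holding times, which are conditionally independent $\mathrm{Exp}(q_{Y_k})$-variables. The key algebraic identity
\[
\frac{q_{\ell m}}{q_\ell}\cdot \mathbb{E}\bigl[e^{\lambda \eta_k}\mid Y_k=\ell\bigr]=\frac{q_{\ell m}}{q_\ell-\lambda}=T_{\ell m}
\]
folds one step of jump probability together with one step of exponential weight into a single entry of $T$. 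Summing the product $\prod_{k=0}^{n-1}T_{j_k j_{k+1}}$ over paths $j=j_0\to j_1\to\cdots\to j_n=i$ whose intermediate states $j_1,\ldots,j_{n-1}$ avoid $H$ then reconstructs ${_HT_{ji}^{(n)}}$. For $i\in H$ the path is stopped at the first reentry, which yields the first identity; for $i\notin H$ the additional integration $\int_{\xi_n}^{\xi_{n+1}}e^{\lambda t}\,dt=e^{\lambda\xi_n}(e^{\lambda\eta_n}-1)/\lambda$ contributes the extra factor $\mathbb{E}[(e^{\lambda\eta_n}-1)/\lambda\mid Y_n=i]=1/(q_i-\lambda)$, yielding the second.

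Part (2) follows from the same path decomposition applied without any taboo: this produces the companion identity $P_{jj}(\lambda)=\sum_{n\ge 0}T_{jj}^{(n)}/(q_j-\lambda)$, so $\lambda$-recurrence of $P(t)$ is equivalent to $\sum_n T_{jj}^{(n)}=\infty$. The first-return identity $\sum_n T_{jj}^{(n)}s^n = 1/(1-F_j(s))$, with $F_j(s)=\sum_n {_jT_{jj}^{(n)}}s^n$ non-decreasing in $s$, then pins the convergence parameter at $R=1$ and recasts the condition as Seneta's $R$-recurrence of $T$.

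Part (3) is a direct verification of $(uQ)_i=-\lambda u_i$. By part (2), $u_j(q_j-\lambda)=\sum_n {_jT_{jj}^{(n)}}=1$, and by definition $u_l(q_l-\lambda)=\sum_n {_jT_{jl}^{(n)}}$ for every $l\in E$. Writing $q_{li}=(q_l-\lambda)T_{li}$ for $l\neq i$ gives $\sum_{l\neq i}u_l q_{li}=\sum_{l\neq i}T_{li}\sum_n {_jT_{jl}^{(n)}}$. Using $T_{ii}=0$ and the identity $\sum_n {_jT_{jj}^{(n)}}=1$ to replace $\{l\neq i\}$ by $\{l\neq j\}$ (with a boundary correction of $T_{ji}$), the taboo recursion $\sum_{l\neq j}{_jT_{jl}^{(n)}}T_{li}={_jT_{ji}^{(n+1)}}$ telescopes the right-hand side to $T_{ji}+\sum_{n\ge 2}{_jT_{ji}^{(n)}}=\sum_n {_jT_{ji}^{(n)}}=(q_i-\lambda)u_i$, which is the claimed $\lambda$-invariance. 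The main technical delicacy throughout lies in part (1), where justifying the interchange of the sum over paths with the expectation and the $t$-integral requires $q_i>\lambda$ for all $i$ and the absolute convergence of the resulting series, itself guaranteed by $\lambda$-recurrence.
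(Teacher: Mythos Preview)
Your argument is correct, but your route to part~(1) differs meaningfully from the paper's. You decompose the integral directly along the embedded jump chain: partition time into holding intervals $[\xi_n,\xi_{n+1})$, condition on the skeleton $(Y_k)$, and use $\mathbb{E}[e^{\lambda\eta_k}\mid Y_k=\ell]=q_\ell/(q_\ell-\lambda)$ to fold each transition-plus-weight into one entry of $T$. The paper instead sets up an integral equation for $\mathbb{P}_j[X_t=i,\tau_H^+>t]$ via a \emph{last-jump} decomposition, takes Laplace transforms, and identifies the solution through the second-iteration (minimal non-negative solution) method of Hou--Guo; the first identity is then obtained by a first-jump decomposition. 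Your approach is more elementary and makes the link between $P(t)$ and $T$ completely explicit; the paper's approach is more systematic and sidesteps any worry about interchanging limits, since the minimal-solution machinery delivers the answer as an identity without needing a separate Fubini justification. One small remark: the interchange in your argument is covered by Tonelli (all terms are non-negative), so the identities hold in $[0,\infty]$ with no appeal to $\lambda$-recurrence; your final sentence slightly overstates the delicacy.

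For parts~(2) and~(3) the two arguments coincide in substance: both deduce $P_{jj}(\lambda)=(q_j-\lambda)^{-1}\sum_n T_{jj}^{(n)}$ from the no-taboo case, and your direct verification of $uQ=-\lambda u$ is exactly the content of Seneta's Theorem~6.2, which the paper simply cites.
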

\begin{proof}
	(1) It follows from decomposing according to the last jump that 
	\begin{equation}
		\begin{aligned}
			\mathbb{P}_j[X_t=i,\tau_H^+>t]dt&=\delta_{ij}e^{-q_jt}+\sum_{k\notin H}\int_{0}^{t}\mathbb{P}_j[X_t=i,X_s=k,X_{s+ds}=i,X_v=i,v\in[s+ds,t],\tau_H^+>t]\\
			&=\delta_{ij}e^{-q_jt}+\sum_{k\notin H}\int_{0}^{t}\mathbb{P}_j[X_t=i,X_v=i,v\in[s+ds,t]|X_s=k,X_{s+ds}=i,\tau_H^+>s]\\
			&~~~~\cdot\mathbb{P}_j[X_{s+ds}=i|X_s=k]\cdot\mathbb{P}_j[X_s=k,\tau_H^+>s]~~~~~~~~~~~~~~\text{(by Markov property)}\\
			&=\delta_{ij}e^{-q_jt}+\sum_{k\notin H}\int_{0}^{t}e^{-q_i(t-s)}q_{ki}\mathbb{P}_j[X_t=k,\tau_H^+>s]ds.
		\end{aligned}
	\end{equation}
It follows from inverse Laplace transform that $$\int_{0}^{\infty}e^{\lambda t}\mathbb{P}_j[X_t=i,t<\tau_H^+]dt=\frac{\delta_{ij}}{q_i-\lambda}
		+\sum_{k\notin H}\frac{q_{ki}}{q_i-\lambda}\int_{0}^{\infty}e^{\lambda t}\mathbb{P}_j[X_t=k,t<\tau_H^+]dt.$$
	Set $$y_i^{(1)}=\frac{\delta_{ij}}{q_i-\lambda},~y_{i}^{(n+1)}=\sum\limits_{k\notin H}\frac{q_{ki}}{q_i-\lambda}y_{k}^{(n)},~n\geq 1,~i\notin H.$$
	Inductively,~$y_i^{(n)}=\frac{1}{q_i-\lambda}{_{H}T_{ji}^{(n-1)}}$.
	Using the second iteration method, 
	$$y_i=\sum_{n=1}^{\infty}y_i^{(n)}=\frac{1}{q_i-\lambda}\sum_{n=1}^{\infty}{_{H}T_{ji}^{(n)}}$$
	is the minimal solution of 
		\begin{equation}\label{solution3}
		y_i=\sum_{k\notin H}y_k\frac{q_{ki}}{q_i-\lambda}+\frac{\delta_{ij}}{q_i-\lambda},~~~~i\notin H.
	\end{equation}
		Similarly, it follows from decomposing according to the first jump that
		$$\int_{0}^{\infty}e^{\lambda t}d\mathbb{P}_{j}[\tau_H^+=\tau_i^+\leq t]=\frac{q_{ji}}{q_j-\lambda}
			+\sum_{k\notin H}\frac{q_{jk}}{q_j-\lambda}\int_{0}^{\infty}e^{\lambda t}\mathbb{P}_k[X_t=i,t<\tau_H^+]dt.$$
	This completes the proof following from the uniqueness of minimal non-negative solution. 
	
	(2) Particularly  we have
	$$\int_{0}^{\infty}e^{\lambda t}p_{ji}(t)=\frac{1}{q_i-\lambda}\sum_{n=0}^{\infty}T_{ji}^{(n)},~~~i\in E,$$
	which indicates the equivalence between the $\lambda$-recurrence of $P(t)$ and recurrence of $T$.
	
	(3) Denote $\bar{u}_i=\sum_{n=1}^{\infty}{{_{j}T_{ji}^{(n)}}},~i\in E$. It follows from (2) and \cite[Theorem 6.2]{seneta2006non} that 
	$$\sum_{i\not= k}\bar{u}_iT_{ik}=\bar{u}_k,~~~k\in E.$$
	That is,
	\begin{equation}
		\sum_{i\in E}\frac{\bar{u}_i}{q_i-\lambda}q_{ik}=-\lambda\frac{\bar{u}_k}{q_k-\lambda}.
	\end{equation}
	This completes the proof.
\end{proof}
\begin{lemma}\label{lemma 1}
	Assume $P(t)$  is $\lambda$-recurrent on $E$. For any finite subset $H\subset E$, define $T^H=({T_{ij}^H})_{i,~j\in H}$, where
	$$
	{{T}_{ij}^H}=\sum\limits_{n=1}^{\infty}{_HT_{ij}^{(n)}},~~~i,~j\in H.
	$$
	Then there exists the unique and positive $\mu=(\mu_i)_{i\in H} ~(\sum_{i\in H}\mu_i=1)$ that 
	satisfies 
	$$\mu T^H=\mu.$$ 
\end{lemma}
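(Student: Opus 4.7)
My plan is to identify the desired $\mu$ as (a normalization of) the restriction to $H$ of the left $\lambda$-invariant row vector $\tilde u$ for $T$ supplied by Lemma \ref{taboo relation}(3), and to establish existence and uniqueness via Perron--Frobenius applied to the finite irreducible non-negative matrix $T^H$. Irreducibility of $T^H$ on the finite set $H$ is inherited from $T$ on $E$: any $T$-path between two states of $H$ can be cut at its visits to $H$ into a positive-weight sequence of excursions, giving a positive-weight path in the $H$-chain $T^H$. The crux is to prove $\rho(T^H) = 1$; the rest is then the standard Perron--Frobenius conclusion for a finite irreducible non-negative matrix.

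To get $\rho(T^H) \geq 1$ I would establish the excursion identity
\begin{equation*}
\sum_{n=1}^{\infty} (T^H)^n_{jj} \;=\; \sum_{N=1}^{\infty} T^{(N)}_{jj}, \qquad j \in H,
\end{equation*}
by classifying every $T$-path $j \to j$ of length $N$ by the ordered sequence of its visits to $H$ and writing its weight as a product of excursion weights $_{H}T_{\cdot\cdot}^{(m)}$; summing freely over the individual excursion lengths converts each factor into an entry of $T^H$, and summing over the number $n$ of $H$-visits with fixed intermediate $H$-states reassembles $(T^H)^n_{jj}$. Since $T$ is recurrent with convergence parameter $1$ by Lemma \ref{taboo relation}(2), the right-hand side is infinite, and hence $\rho(T^H) \geq 1$.

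For $\rho(T^H) \leq 1$ I would feed in the positive row vector $\tilde u$ satisfying $\tilde u T = \tilde u$ from Lemma \ref{taboo relation}(3). Iterating $\tilde u_j = \sum_i \tilde u_i T_{ij}$ only on the $H^c$-coordinates, an induction on $N$ yields
\begin{equation*}
\tilde u_j \;=\; \sum_{n=1}^{N} \sum_{i \in H} \tilde u_i\, {_{H}T_{ij}^{(n)}} \;+\; \sum_{i \notin H} \tilde u_i\, {_{H}T_{ij}^{(N)}}, \qquad j \in H.
\end{equation*}
All terms being non-negative, letting $N \to \infty$ gives $(\tilde u|_H)\, T^H \leq \tilde u|_H$ componentwise. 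Pairing this positive left sub-eigenvector with the positive right Perron eigenvector $v$ of $T^H$ forces $\rho(T^H) \leq 1$, and combined with the lower bound gives $\rho(T^H) = 1$; the same pairing with $\rho = 1$ upgrades the inequality to the equality $(\tilde u|_H)\, T^H = \tilde u|_H$, since any strict defect would contradict preservation of the scalar $(\tilde u|_H) \cdot v$.

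Existence and positivity of $\mu$ then follow by setting $\mu_i = \tilde u_i / \sum_{k \in H} \tilde u_k$, and uniqueness of the normalized positive left eigenvector of $T^H$ associated with the eigenvalue $1$ is the standard Perron--Frobenius conclusion for a finite irreducible non-negative matrix. The main obstacle I foresee is not conceptual but the careful bookkeeping in the excursion identity and in the iterative $H/H^c$ decomposition: each requires tracking paths that strictly avoid $H$ between consecutive visits, which is routine given the recursion $_{H}T_{ij}^{(n)} = \sum_{l \notin H} {_{H}T_{il}^{(n-1)}}\, T_{lj}$ and its ``first-step'' dual $_{H}T_{ij}^{(n)} = \sum_{l \notin H} T_{il}\, {_{H}T_{lj}^{(n-1)}}$.
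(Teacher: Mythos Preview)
Your proof is correct, but it takes a longer route than the paper's. The paper fixes $k\in H$, sets $x^k_j=\sum_{n\ge1}{_kT_{kj}^{(n)}}$ for $j\in H$ (which is exactly your $\tilde u|_H$ once one unpacks Lemma~\ref{taboo relation}(3)), and proves the \emph{equality} $x^k T^H=x^k$ in one stroke via the last-entry-into-$H$ decomposition
\[
{_kT_{kj}^{(n)}}=\sum_{i\in H\setminus\{k\}}\sum_{m=1}^{n-1}{_kT_{ki}^{(m)}}\,{_HT_{ij}^{(n-m)}}+{_HT_{kj}^{(n)}},
\]
summing over $n$ and using $\sum_{n\ge1}{_kT_{kk}^{(n)}}=1$ from recurrence. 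Perron--Frobenius then gives uniqueness and positivity. By contrast you split the work into (i) $\rho(T^H)\ge1$ via the excursion identity $\sum_n (T^H)^n_{jj}=\sum_N T^{(N)}_{jj}=\infty$, (ii) $\rho(T^H)\le1$ via the sub-invariance $(\tilde u|_H)T^H\le\tilde u|_H$, and then (iii) an upgrade to equality. The paper's single decomposition is shorter and bypasses the separate spectral-radius computation; your route has the advantage of isolating $\rho(T^H)=1$ as an explicit intermediate fact and of using only soft Perron--Frobenius reasoning rather than a bespoke path identity. Note that what you cite as ``$\tilde u T=\tilde u$ from Lemma~\ref{taboo relation}(3)'' is really the intermediate $\bar u$ in that lemma's proof (the statement itself gives $uQ=-\lambda u$), so if you keep your argument you should either cite the proof or derive $\bar u T=\bar u$ yourself from recurrence of $T$.
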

\begin{proof}
	It follows from the irreducibility of $Q$ that $T^H$ is non-negative and irreducible. Fix $k\in H$, decomposing according to the last entry into $H$,
	\begin{equation}
		{_kT_{kj}^{(n)}}
		=\sum\limits_{i\in H\backslash\{k\}}\sum\limits_{m=1}^{n-1}{_kT_{ki}^{(m)}}{_H{T_{ij}^{(n-m)}}}
		+{_HT_{kj}^{(n)}},~~~~n\geq1.
	\end{equation}
	Combined with recurrence of $T$ ($\sum_{n=1}^{\infty}{_kT_{kk}^{(n)}}=1$) we have
	\begin{equation}
		x_j^k
		=\sum\limits_{i\in H}x_i^k\sum\limits_{n=1}^{\infty}{_H{T_{ij}^{(n)}}},~~~j\in H,
	\end{equation}
	where $x_j^k=\sum_{n=1}^{\infty}{_kT_{kj}^{(n)}}$ for $j\in H$. Since $T^H$ is non-negative and irreducible, by letting 
	$\mu_i=x_i^k/\sum_{i\in H}x_i^k$ for $i\in H$,
	it follows from the Perron-Frobenius theorem that $\mu=(\mu_i)_{i\in H}$ is unique and positive. Furthermore, we can obtain that there exists the unique right eigenvector $v=(v_i)_{i\in H}$ (up to constant multiples) that satisfies $T^Hv=v$.
\end{proof}
	When $H$ is singleton ($H=\{k\}$ say), Lemma \ref{lemma 1} reduces to $F_{kk}(\lambda)=1.$ And it follows from Lemma \ref{taboo relation} (1) that
	\begin{equation}\label{sumT}
		\sum\limits_{i,~j\in H}\mu_iT_{ij}^H=\sum\limits_{i\in H}\mu_i\mathbb{E}_{i}
e^{\lambda\tau_H^+}I_{[\tau_H^+<\infty]}=1.
	\end{equation}
	 Now we are ready to prove Theorem \ref{main}.

	\begin{proof}
		(1) It follows from Lemma \ref{lemma 1} immediately.
			(2) Let
		\begin{equation}
			\bar x_i=\left\{
			\begin{aligned}
				&\mu_i,~~~&i\in H;\\
				&\sum\limits_{j\in H}\mu_j\sum\limits_{n=1}^{\infty}{_H{T_{ji}^{(n)}}},~~~&i\notin H.
			\end{aligned}
			\right.
		\end{equation}
		By using one-step decomposition,  we have for $j\in E$,
	\begin{equation}
		\begin{aligned}
			\sum\limits_{i\in E}\bar x_iT_{ij}
			&=\sum\limits_{i\in H}\mu_iT_{ij}+\sum\limits_{i\notin H}	\sum\limits_{r\in H}\mu_r\sum\limits_{n=1}^{\infty}{_HT_{ri}^{(n)}}T_{ij}\\
			&=\sum\limits_{i\in H}\mu_ip_{ij}+\sum\limits_{r\in H}\mu_r\left(\sum\limits_{n=2}^{\infty}{_HT_{rj}^{(n)}}\right)\\
			&=\sum\limits_{r\in H}\mu_r\sum\limits_{n=1}^{\infty}{_HT_{rj}^{(n)}}\\
			&=\bar x_j.
		\end{aligned}
	\end{equation}
It follows from Lemma \ref{taboo relation} (3) that $xQ=-\lambda u$, where $x_i=\bar{x}_i/(q_i-\lambda),~i\in E.$
	Similarly, we know that  
	\begin{equation}\label{y}
		y_i=\left\{
		\begin{aligned}
			&v_i,~~&i\in H;\\
			&\sum\limits_{j\in H}\sum\limits_{n=1}^{\infty}{_HT_{ij}^{(n)}}v_j,~~&i\notin H
		\end{aligned}
		\right.
	\end{equation}
	and $y=(y_i)_{i\in E}$ is the unique $\lambda$-invariant vector (up to constant multiples) of $P(t)$, where $v=(v_i)_{i\in H}$ satisfies $T^Hv=v$.
Furthermore, it follows from Lemma \ref{taboo relation} (2) that
\begin{equation}\label{sum}
	\begin{aligned}
		\sum_{i\in E}x_i&=\sum_{i\in H}\frac{\mu_i}{q_i-\lambda}+\sum_{i\notin H}\sum_{j\in H}\mu_j\int_{0}^{\infty}e^{\lambda t}\mathbb{P}_j[X_t=i,\tau_H^+>t]dt\\
		&==\sum_{i\in H}\frac{\mu_i}{q_i-\lambda}+\sum_{j\in H}\mu_j\left(\int_{0}^{\infty}e^{\lambda t}\mathbb{P}_j[X_t\in E,\tau_H^+>t]dt-\int_{0}^{\infty}e^{\lambda t}\mathbb{P}_j[X_t\in H,\tau_H^+>t]dt\right)\\
		&=\sum_{j\in H}\mu_j\mathbb{E}_j\int_{0}^{\tau_H^+\wedge \tau_0}e^{\lambda t}dt.
	\end{aligned}
\end{equation}
Combined with (\ref{sumT}) we have 
\begin{equation}\label{sumx}
	\sum\limits_{i\in E}x_i=\frac{1}{\lambda}\sum_{j\in H}\mu_j\mathbb{E}_je^{\lambda \tau_0}I_{[\tau_H^+=\infty]}=\sum_{j\in H}\frac{\mu_jq_{j0}}{\lambda(q_j-\lambda)}.
\end{equation}
By letting $u_i=\mu_i/M_H$ for $i\in H$; $x_i/M_H$ for $i\notin H$, $u=(u_i)_{i\in E}$ is $\lambda$-QSD of $P(t)$.
This completes the proof.
	\end{proof}

	When exit states may be infinite, we use the $h$-transform argument to give a sufficient condition under which the $\lambda$-recurrence implies the existence of $\lambda$-QSD.
	\begin{proof}
		(1)~Let $\tau_{k}=\inf\{t\geq \xi_1:X_t=k\}$~and $h_i=\mathbb{P}_{i}[\tau_{k}<\infty].$   Then
		\begin{equation}
				\left\{ 
			\begin{aligned}
				\bar{P}h(i)&=h_i,~~&i\not=k, \\
				h_k&=1,~~&i=k,
			\end{aligned}
			\right.
		\end{equation}
	where $\bar{P}=(\bar{p}_{ij})_{i,~j\in E}$ is the embedded chain of $P(t)$ with
	$$\bar{p}_{ij}=q_{ij}/q_i,~~~j\not=i;~~~\bar{p}_{ii}=0,~~~i,~j\in E.$$
	Using $h$-transform,
	define $s_{ij}=q_{ij}h_j/h_i,~i\in E$.
	Since
	\begin{align*}
		&\sum_{j\in E}s_{ij}=\sum_{j\in E}{q_{ij}h_j}/{h_i}=1,
		&\forall ~i\not=k,\\
		&\sum_{j\in E}s_{kj}=\sum_{j\in E}q_{kj}h_j=q_k[\mathbb{P}_k[\tau_k^+<\infty]-1]<0,
	\end{align*}
$S=(s_{ij})_{i,~j\in E}$ is a $Q$-matrix with only exit state  $\{k\}$.
	 Consider its minimal $Q$-process  $S(t)=\{s_{ij}(t):i,~j\in E\}$.
 It follows from the $\lambda$-recurrence of $P(t)$ that $S(t)$ is  also $\lambda$-recurrent noticing that  $s_{ij}(t)=p_{ij}(t)h_j/h_i$.  
	Let $\tilde{S}=\{\tilde{s}_{ij}:i,j\in \tilde{E}=E\cup 0\}$ be defined by
	\begin{equation*}
		\tilde s_{ij}=\left\{
		\begin{aligned}
			&0,~~~~&i=0=j;\\
			&1-\sum\limits_{j\not=0}s_{ij},~~&i\not=0,j=0;\\
			&s_{ij},~~&i\not=0,j\not=0.
		\end{aligned}
		\right.
	\end{equation*}
Denote $\tilde{\tau_0}$ the life time of $S(t)$. It follows from $s_{ik}=q_{ik}/h_i$ that
$$
\mathbb{P}_{i}[\tilde\tau_0<\infty]=\sum_{n=0}^{\infty}\bar{p}_{ik}^{(n)}(1-\mathbb{P}_k[\tau_k^+<\infty])/h_i,~~~i\in E.$$
By Lemma \ref{taboo relation}, apply (\ref{generation function})($s=0$) to derive that $\widetilde{S}(t)$ is also certainly absorbed at 0.
It follows from Theorem \ref{main} that
\begin{equation}\label{xh}
	x_i^h=\sum\limits_{n=1}^{\infty}{_{k}T_{ki}^{(n)}}h_i/(q_i-\lambda),~~~~i\in E
\end{equation}
and $x^h=(x_i^h)$ satisfies $x^hS=-\lambda x^h$. And from (\ref{sumx}) we have
\begin{equation}\label{xhsum}
	\sum_{i\in E}x_i^h=\frac{q_k(1-\mathbb{P}_k[\tau_k^+<\infty])}{\lambda(q_k-\lambda)}.
\end{equation}
Recall that $x_i=\sum\limits_{n=1}^{\infty}{_{k}T_{ki}^{(n)}}/(q_i-\lambda),~i\in E$~is the $\lambda$-invariant measure of $P(t)$ (see Remark 1.2 (i)). It follows from(\ref{xh}) and (\ref{xhsum}) that
$$\mathbb{E}_ke^{\lambda\tau_0}I_{[\tau_k^+=\infty]}=\lambda\sum_{i\in E}x_i\leq \frac{\sum_{i\in E}x_i^h}{\inf_{i\in E}h_i}=\frac{q_k(1-\mathbb{P}_k[\tau_k^+<\infty])}{ \inf_{i\in E}h_i(q_k-\lambda)}.$$
	Particularly if $\inf_{i\in E}h_i>0$, $x_i=\sum_{n=1}^{\infty}{_{k}T_{ki}^{(n)}}/(q_i-\lambda),~i\in E$ is $\lambda$-QSD of $P$.
	\end{proof}
	
	\section{Examples}
\begin{proof}[Proof of Example 1.4]
	(1) Obviously $Q$ is regular on $E$. Define $P=Q+I$, then
	$$
	F_{11}(s)=\sum\limits_{n=1 }^{\infty}{_{1}p_{11}^{(n)}}s^n=rs+\frac{pqs^2}{1-ps},~~~~~s\in(0,1/p).
	$$
	Solving equation $F_{11}(s)=1$ to derive $R_P$, where $R_P$ is the convergence parameter of $P$. It follows from \cite[Page 358]{kingman1963exponential} that
	$$\lambda=1-1/R_P=\dfrac{2(1-p)w}{1+w+\sqrt{(1-w)^2+4pw}}.$$
	(2) ``Embedded chain'' $T=(T_{ij})$ is
	\begin{equation*}T_{ij}=\left\{
		\begin{aligned}
			p/(1-\lambda-r),~~~&i=1,~j=2,\\
			p/(1-\lambda),~~~&i>1,~j=i+1,\\
			q/(1-\lambda),~~~&i>1,~j=1,\\
			0,~~~&\text{others}.
		\end{aligned}
	\right.
	\end{equation*}
Notice that $$T_{11}^{(1)}=0,~~~{_1T_{11}^{(n)}}=\dfrac{p}{1-\lambda-r}(\dfrac{p}{1-\lambda})^{n-2}\dfrac{q}{1-\lambda},~~~n\geq2.$$ It is easily to see that $\sum_{n\geq 0}{_1T_{11}^{(n)}}=1$, so $T$ is recurrent. It follows from Lemma \ref{taboo relation} (2) that $P(t)$ is $\lambda$-recurrent. Taking $j=1$,  it follows from Lemma \ref{taboo relation} (3) that
$$x_i=\frac{1}{q_i-\lambda}\sum_{n=0}^{\infty}{_{1}T_{1i}^{(n)}}=\dfrac{1}{1-\lambda-r}(\dfrac{p}{1-\lambda})^{i-1},~~i\in E$$
and $x=(x_i)_{i\in E}$ satisfies $xQ=-\lambda Q$.
Combined Theorem \ref{main} (2), we have 
$$u_i=(\dfrac{p}{1-\lambda})^{i-1}\dfrac{\lambda}{w},~~~i\in E$$
and $u=(u_i)_{i\in E}$ is $\lambda$-QSD of $P(t)$.
(3) It follows from (\ref{y}) that $y_1=1,~y_i=q/(q-\lambda),~i\in E$. $P(t)$ is $\lambda$-positive recurrent since $\sum_{i\geq1}x_iy_i<\infty$. 
\end{proof}
	Example \ref{ex5.2} and Example \ref{ex5.3} are birth-death processes on $\mathbb{Z}$ and $\mathbb{Z}_+$ respectively, which can be found in \cite[Page 345]{kingman1963exponential}, \cite[Page 188]{anderson2012continuous} or \cite{seneta1966quasi}.  Here  we prove that the birth-death process on a line is $\lambda$-null recurrent  without a corresponding $\lambda$-QSD and the birth-death on a half-line with an absorbing barrier at 0 is $\lambda$-transient but it has a corresponding $\lambda$-QSD.

	\begin{proof}[Proof of Example 1.5]
		(1) It follows from Example 5.3.1 in \cite{anderson2012continuous} or \cite{kingman1963exponential} that $\lambda=(1-2\sqrt{pq})c$.~(2) ``Embedded chain'' $T=(T_{ij})_{i,j\in \mathbb{Z}}$ is 
		\begin{equation}\label{T}
			T_{ij}=\left\{\begin{aligned}
				&\frac{\sqrt{p/q}}{2},~~~&j=i+1,\\
				&\frac{\sqrt{q/p}}{2},~~~&j=i-1,\\
				&0,~~&\text{others}.
			\end{aligned}
			\right.
		\end{equation}
	Note that $T_{jj}^{(m)}=0$ if $m$ is an odd integer, and 
	$$T_{jj}^{(2n)}=\binom{2n}{n}\left(\frac{\sqrt{p/q}}{2}\right)^n\left(\frac{\sqrt{q/p}}{2}\right)^n=\binom{2n}{n}\left(\frac{1}{4}\right)^n,~~~n\geq 0.$$
	It follows from Stirling's approximation that convergence parameter of $T$ is 1. And $\sum_{n\geq0}T_{jj}^{(n)}=\infty$, so $T$ is recurrent. It follows from Lemma \ref{taboo relation} (2) that $P(t)$ is $\lambda$-recurrent.
		Taking $k=0$, it follows that 
		$
		x_i=\sum_{n=1}^{\infty}{_0T_{0i}^{(n)}},~i \in \mathbb{Z}
		$
		satisfies $xT=x$.
		Noticing that $\sum_{n=1}^{\infty}{_{\{0,1\}}T_{11}^{(n)}}=(1/2)\sum_{n=1}^{\infty}{_1T_{11}^{(n)}}=1/2$, it follows from \cite[Page 202]{seneta2006non} that
		$$\sum_{n=1}^{\infty}{_0T_{11}^{(n)}}=\frac{1}{1-\sum_{n=1}^{\infty}{_{\{0,1\}}T_{11}^{(n)}}}=2.$$
		So 
		$$x_1=\sum_{n=1}^{\infty}{_0T_{01}^{(n)}}=\frac{1}{2}\sqrt{\frac{p}{q}}\sum_{n=1}^{\infty}{_0T_{11}^{(n)}}=\sqrt{\frac{p}{q}}.$$
	Inductively,
$	x_j=(\sqrt{p/q})^j,~j\in \mathbb{Z}$ is the unique invariant measure of $T$ up to constant multiples. By letting $$u_i=x_i/(q_i-\lambda)=
\frac{1}{2c\sqrt{pq}}(\sqrt{\frac{p}{q}})^i,$$
it follows from Lemma \ref{taboo relation} (3) that $uQ=-\lambda u.$
	Furthermore, there is no $\lambda$-QSD of $P(t)$ since $(u_i)_{i\in \mathbb{Z}}$ is non-summable.
	It follows from symmetric property of $T$ that 
	$
	y_j=(\sqrt{q/p})^j,~j\in \mathbb{Z}
	$,~$y=(y_i)_{i\in E}$ satisfies $Ty=y$ or $Qy=-\lambda y$,
	which indicates that  $\sum_{i\in \mathbb{Z}}u_iy_i=\infty.$ That is $P(t)$ is $\lambda$-null recurrent.
\end{proof}
\begin{proof}[Proof of Example 1.6]
	(1) It follows from \cite[Proposition 3.2]{anderson2012continuous} or \cite{tweedie1974some} that $\lambda=(1-2\sqrt{pq})c$.	(2) Let $\bar{T}=(\bar{T}_{ij})_{i,~j\in E}$ be the ``Embedded chain''  $T$ in (\ref{T}) restricted on $E$. Since $\bar{T}$ has only half the number of paths returning to 1 compared with $T$, it follows that 
	$\sum_{n=1}^{\infty}{_1\bar T_{11}^{(n)}}=1/2.$ So $P(t)$ is $\lambda$-transient.
	Without loss of generality, we set $x_1=1$. It follows from 
	$$\sum\limits_{i\geq1}x_iq_{ij}=-(1-2\sqrt{pq})cx_j,~~~j\geq1$$
	inductively that  
	$
	x_j=j(\sqrt{p/q})^{j-1},~j\geq1.
	$
	It is easy to get
	$
	\sum_{j\geq1}x_j={q}/{(\sqrt{q}-\sqrt{p})^2}.
	$
	By letting 
	$$
	x_i={j(\sqrt{p})^{j-1}(\sqrt{q}-\sqrt{p})^2}/{\sqrt{q}^{j+1}},~~~j\in E,
	$$
	$x=(x_i)_{i\in E}$ is  $\lambda$-QSD of $P(t)$  even if $P(t)$ is $\lambda$-transient.
\end{proof}

\bibliographystyle{elsarticle-harv}
%%
%%% Loading bibliography database
\bibliography{rerefer.bib}

\begin{thebibliography}{23}
\expandafter\ifx\csname natexlab\endcsname\relax\def\natexlab#1{#1}\fi
\providecommand{\url}[1]{\texttt{#1}}
\providecommand{\href}[2]{#2}
\providecommand{\path}[1]{#1}
\providecommand{\DOIprefix}{doi:}
\providecommand{\ArXivprefix}{arXiv:}
\providecommand{\URLprefix}{URL: }
\providecommand{\Pubmedprefix}{pmid:}
\providecommand{\doi}[1]{\href{http://dx.doi.org/#1}{\path{#1}}}
\providecommand{\Pubmed}[1]{\href{pmid:#1}{\path{#1}}}
\providecommand{\bibinfo}[2]{#2}
\ifx\xfnm\relax \def\xfnm[#1]{\unskip,\space#1}\fi
%Type = Book
\bibitem[{Anderson(2012)}]{anderson2012continuous}
\bibinfo{author}{Anderson, W.J.}, \bibinfo{year}{2012}.
\newblock \bibinfo{title}{Continuous-time {M}arkov {C}hains: An
  {A}pplications-{O}riented {A}pproach}.
\newblock \bibinfo{publisher}{Springer Science \& Business Media}.
%Type = Book
\bibitem[{Bartlett(1960)}]{bartlett1960stochastic}
\bibinfo{author}{Bartlett, M.S.}, \bibinfo{year}{1960}.
\newblock \bibinfo{title}{Stochastic {P}opulation {M}odels in {E}cology and
  {E}pidemiology}.
\newblock \bibinfo{publisher}{London: Methuen}.
%Type = Article
\bibitem[{Bean et~al.(2000)Bean, Pollett and Taylor}]{bean2000quasistationary}
\bibinfo{author}{Bean, N.G.}, \bibinfo{author}{Pollett, P.K.},
  \bibinfo{author}{Taylor, P.G.}, \bibinfo{year}{2000}.
\newblock \bibinfo{title}{Quasistationary distributions for level-dependent
  quasi-birth-and-death processes}.
\newblock \bibinfo{journal}{Stochastic Models} \bibinfo{volume}{16},
  \bibinfo{pages}{511--541}.
%Type = Article
\bibitem[{Cattiaux et~al.(2009)Cattiaux, Collet, Lambert, Mart{\'\i}nez,
  M{\'e}l{\'e}ard and San~Mart{\'\i}n}]{cattiaux2009quasi}
\bibinfo{author}{Cattiaux, P.}, \bibinfo{author}{Collet, P.},
  \bibinfo{author}{Lambert, A.}, \bibinfo{author}{Mart{\'\i}nez, S.},
  \bibinfo{author}{M{\'e}l{\'e}ard, S.}, \bibinfo{author}{San~Mart{\'\i}n, J.},
  \bibinfo{year}{2009}.
\newblock \bibinfo{title}{Quasi-stationary distributions and diffusion models
  in population dynamics}.
\newblock \bibinfo{journal}{The Annals of Probability} \bibinfo{volume}{37},
  \bibinfo{pages}{1926--1969}.
%Type = Article
\bibitem[{Cavender(1978)}]{cavender1978quasi}
\bibinfo{author}{Cavender, J.A.}, \bibinfo{year}{1978}.
\newblock \bibinfo{title}{Quasi-stationary distributions of birth-and-death
  processes}.
\newblock \bibinfo{journal}{Advances in Applied Probability}
  \bibinfo{volume}{10}, \bibinfo{pages}{570--586}.
%Type = Book
\bibitem[{Collet et~al.(2013)Collet, Mart{\'\i}nez and
  Mart{\'\i}n}]{collet2013quasi}
\bibinfo{author}{Collet, P.}, \bibinfo{author}{Mart{\'\i}nez, S.},
  \bibinfo{author}{Mart{\'\i}n, J.S.}, \bibinfo{year}{2013}.
\newblock \bibinfo{title}{Quasi-stationary distributions: {M}arkov {C}hains,
  {D}iffusions and {D}ynamical systems}.
\newblock \bibinfo{publisher}{Springer}.
%Type = Article
\bibitem[{Darroch and Seneta(1967)}]{darroch1967quasi}
\bibinfo{author}{Darroch, J.N.}, \bibinfo{author}{Seneta, E.},
  \bibinfo{year}{1967}.
\newblock \bibinfo{title}{On quasi-stationary distributions in absorbing
  continuous-time finite markov chains}.
\newblock \bibinfo{journal}{Journal of Applied Probability}
  \bibinfo{volume}{4}, \bibinfo{pages}{192--196}.
%Type = Article
\bibitem[{van Doorn(2006)}]{van2006alpha}
\bibinfo{author}{van Doorn, E.A.}, \bibinfo{year}{2006}.
\newblock \bibinfo{title}{On the $\alpha$-classification of birth-death and
  quasi-birth-death processes}.
\newblock \bibinfo{journal}{Stochastic models} \bibinfo{volume}{22},
  \bibinfo{pages}{411--421}.
%Type = Article
\bibitem[{van Doorn and Pollett(2013)}]{van2012quasi}
\bibinfo{author}{van Doorn, E.A.}, \bibinfo{author}{Pollett, P.K.},
  \bibinfo{year}{2013}.
\newblock \bibinfo{title}{Quasi-stationary distributions for discrete-state
  models}.
\newblock \bibinfo{journal}{European journal of operational research}
  \bibinfo{volume}{230}, \bibinfo{pages}{1--14}.
%Type = Article
\bibitem[{van Doorn and Zeifman(2005)}]{van2005extinction}
\bibinfo{author}{van Doorn, E.A.}, \bibinfo{author}{Zeifman, A.I.},
  \bibinfo{year}{2005}.
\newblock \bibinfo{title}{Extinction probability in a birth-death process with
  killing}.
\newblock \bibinfo{journal}{Journal of applied probability}
  \bibinfo{volume}{42}, \bibinfo{pages}{185--198}.
%Type = Article
\bibitem[{Good(1968)}]{good1968limiting}
\bibinfo{author}{Good, P.}, \bibinfo{year}{1968}.
\newblock \bibinfo{title}{The limiting behavior of transient birth and death
  processes conditioned on survival}.
\newblock \bibinfo{journal}{Journal of the Australian Mathematical Society}
  \bibinfo{volume}{8}, \bibinfo{pages}{716--722}.
%Type = Book
\bibitem[{Hou and Guo(2012)}]{hou2012homogeneous}
\bibinfo{author}{Hou, Z.}, \bibinfo{author}{Guo, Q.}, \bibinfo{year}{2012}.
\newblock \bibinfo{title}{Homogeneous denumerable Markov processes}.
\newblock \bibinfo{publisher}{Springer Science \& Business Media}.
%Type = Article
\bibitem[{Jacka and Roberts(1995)}]{jacka1995weak}
\bibinfo{author}{Jacka, S.D.}, \bibinfo{author}{Roberts, G.O.},
  \bibinfo{year}{1995}.
\newblock \bibinfo{title}{Weak convergence of conditioned processes on a
  countable state space}.
\newblock \bibinfo{journal}{Journal of Applied Probability}
  \bibinfo{volume}{32}, \bibinfo{pages}{902--916}.
%Type = Article
\bibitem[{Kijima(1993)}]{kijima1993quasi}
\bibinfo{author}{Kijima, M.}, \bibinfo{year}{1993}.
\newblock \bibinfo{title}{Quasi-limiting distributions of {M}arkov chains that
  are skip-free to the left in continuous time}.
\newblock \bibinfo{journal}{Journal of applied probability}
  \bibinfo{volume}{30}, \bibinfo{pages}{509--517}.
%Type = Article
\bibitem[{Kingman(1963)}]{kingman1963exponential}
\bibinfo{author}{Kingman, J.}, \bibinfo{year}{1963}.
\newblock \bibinfo{title}{The exponential decay of markov transition
  probabilities}.
\newblock \bibinfo{journal}{Proceedings of the London Mathematical Society}
  \bibinfo{volume}{3}, \bibinfo{pages}{337--358}.
%Type = Article
\bibitem[{Mandl(1960)}]{mandl1960asymptotic}
\bibinfo{author}{Mandl, P.}, \bibinfo{year}{1960}.
\newblock \bibinfo{title}{On the asymptotic behaviour of probabilities within
  groups of states of a homogeneous markov process}.
\newblock \bibinfo{journal}{ˇCasopis Pest. Mat} \bibinfo{volume}{85},
  \bibinfo{pages}{448--455}.
%Type = Article
\bibitem[{Pinsky(1985)}]{pinsky1985convergence}
\bibinfo{author}{Pinsky, R.G.}, \bibinfo{year}{1985}.
\newblock \bibinfo{title}{On the convergence of diffusion processes conditioned
  to remain in a bounded region for large time to limiting positive recurrent
  diffusion processes}.
\newblock \bibinfo{journal}{The Annals of Probability} \bibinfo{volume}{13},
  \bibinfo{pages}{363--378}.
%Type = Article
\bibitem[{Pollett(2015)}]{pollettquasi}
\bibinfo{author}{Pollett, P.K.}, \bibinfo{year}{2015}.
\newblock \bibinfo{title}{Quasi-stationary distributions: {A} bibliography}.
\newblock \bibinfo{journal}{Available online at
  \href{https://www.researchgate.net/publication/237600607\_Quasi-stationary\_Distributions\_A\_Bibliography}
  {https://www.researchgate.net/publication/237600607\_Quasi-stationary\_Distributions\_A\_Bibliography}}
  .
%Type = Book
\bibitem[{Seneta(2006)}]{seneta2006non}
\bibinfo{author}{Seneta, E.}, \bibinfo{year}{2006}.
\newblock \bibinfo{title}{Non-negative {M}atrices and {M}arkov chains}.
\newblock \bibinfo{publisher}{Springer Science \& Business Media}.
%Type = Article
\bibitem[{Seneta and Vere-Jones(1966)}]{seneta1966quasi}
\bibinfo{author}{Seneta, E.}, \bibinfo{author}{Vere-Jones, D.},
  \bibinfo{year}{1966}.
\newblock \bibinfo{title}{On quasi-stationary distributions in discrete-time
  {M}arkov chains with a denumerable infinity of states}.
\newblock \bibinfo{journal}{Journal of Applied Probability}
  \bibinfo{volume}{3}, \bibinfo{pages}{403--434}.
%Type = Article
\bibitem[{Steinsaltz and Evans(2007)}]{steinsaltz2007quasistationary}
\bibinfo{author}{Steinsaltz, D.}, \bibinfo{author}{Evans, S.},
  \bibinfo{year}{2007}.
\newblock \bibinfo{title}{Quasistationary distributions for one-dimensional
  diffusions with killing}.
\newblock \bibinfo{journal}{Transactions of the American Mathematical Society}
  \bibinfo{volume}{359}, \bibinfo{pages}{1285--1324}.
%Type = Article
\bibitem[{Tweedie(1974)}]{tweedie1974some}
\bibinfo{author}{Tweedie, R.L.}, \bibinfo{year}{1974}.
\newblock \bibinfo{title}{Some ergodic properties of the feller minimal
  process}.
\newblock \bibinfo{journal}{The Quarterly Journal of Mathematics}
  \bibinfo{volume}{25}, \bibinfo{pages}{485--495}.
%Type = Article
\bibitem[{Yaglom(1947)}]{1947Certain}
\bibinfo{author}{Yaglom, A.M.}, \bibinfo{year}{1947}.
\newblock \bibinfo{title}{Certain limit theorems of the theory of branching
  processes (in {R}ussian)}.
\newblock \bibinfo{journal}{Dokl. Acad. Nauk SSSR} \bibinfo{volume}{56},
  \bibinfo{pages}{795--798}.

\end{thebibliography}
%
%% Biography
%\bio{}
%% Here goes the biography details.
%\endbio
%
%\bio{pic1}
%% Here goes the biography details.
%\endbio

\end{document}